\newtheorem{theo}{Theorem}[section]
\newtheorem{lem}[theo]{Lemma}
\newtheorem{cor}[theo]{Corollary}
\newtheorem{definition}{Definition}[section]
\renewcommand\@maketitle{%
\null									
{
\large					
\@title \par}%
\vskip 0.6em%
{
\large						
\lineskip .5em%
\begin{tabular}[t]{l}%
\@author
\end{tabular}\par}%
\vskip 1cm
\par
}
\title{Symmetric Chromatic Polynomial of Trees}
\author{Isaac Smith, Zane Smith, \& Peter Tian\\
The Ohio State University}
\begin{document}

\begin{minipage}{\textwidth}					
\maketitle
\end{minipage}
\vspace{1cm}

\raggedcolumns							


\large{

\begin{abstract}

In a 1995 paper Richard Stanley defined $X_G$, the symmetric chromatic polynomial of a Graph $G=(V,E)$. He then conjectured that $X_G$ distinguishes trees; a conjecture which still remains open. $X_G$ can be represented as a certain collection of integer partitions of $|V|$ induced by each $S\subseteq E$, which is very approachable with the aid of a computer. Our research involved writing a computer program for efficient verification of this conjecture for trees up to 23 vertices. In this process, we also gather trees with matching collections of integer partitions of a fixed number of parts. For each $k=2, 3, 4, 5$, we provide the smallest pair of trees whose partitions of $k$ parts agree. In 2013, Orellana and Scott give a proof of a weaker version of Stanely's conjecture for trees with one centroid. We prove a similar result for arbitrary trees, and provide examples to show that this result, combined with that of Orellana and Scott, is optimal.

\end{abstract}

\maketitle

\section{Introduction}

Let $G = (V,E)$ be a finite graph, $V = \{v_1,v_2,...,v_n\}$ the set of vertices, and $E = \{e_1,e_2,...,e_{m}\}$ the set of edges.
The Chromatic Symmetric Polynomial is a function of countably many commuting indeterminates, and is defined as:
    \begin{equation} 
        X_G(x_1,x_2,...) = \sum_{\kappa} \prod_{i=1}^{n} x_{\kappa(i)}
    \end{equation}
Where $\kappa:\{1,2,...,n\} \rightarrow \mathds{N}$ is a coloring of G and the sum above is taken over all \textit{proper} colorings, i.e. colorings in which no edge connects two vertices of the same color.
In \cite{St}, Stanley provides another form of $X_G$ that we will primarily be using throughout our research. The power-sum functions, $p_m := \sum_{i=1}^{\infty} x_{i}^{m}$ 
provide a basis for the space of symmetric polynomials and $X_G$ can be written as:
    \begin{equation}
    \label{sch-in-p}
        X_G = \sum_{S \subseteq E} (-1)^{n -1-\#S}p_{\theta_{G}(S)}
    \end{equation}
Where $\theta_{G}(S) = (t_1,t_2,...,t_r)$, is an integer partition of $n$ with each $t_j$ corresponding to the number of vertices in a component of $G$ after removing every edge of $S$.
And where $p_{\theta_{G}(S)} := p_{t_1}p_{t_2}...p_{t_r}$ is a product of power sum functions. 
 Also in \cite{St}, Stanley both gives the definition above, and the proposes the following conjecture: 
\\*\newtheorem*{theorem}{Conjecture}
\begin{theorem}
 \hspace{20pt}$X_G$ distinguishes trees
\end{theorem}

That is, for any two non-isomorphic trees, $G_1$ and $G_2$, we have $X_{G_1} \neq X_{G_2}$.
\\*
\\*It's already known that $X_G$ does not distinguish graphs in general (see figure below).

\pagebreak[4]

\centerline{\includegraphics[width=.4\textwidth]{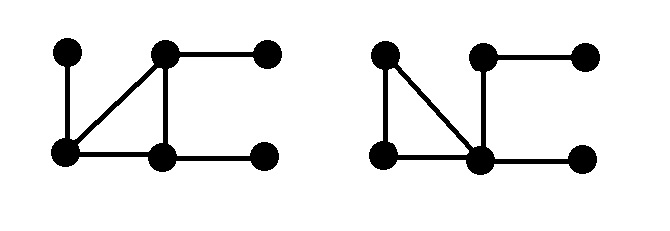}}
Two non-isomorphic graphs with the same Symmetric Chromatic Polynomial. From now on all graphs mentioned will be assumed to be trees, i.e. connected graphs without cycles.
\\*
\\*In \cite{APZ}, Aliste-Prieto and Zamora proved this conjecture for a special subset of trees known as ``Caterpillars''. Caterpillars are trees consisting of a path (the ``spine'') and each vertex of the spine having an arbitrary number of leaves (the ``legs'').
\\*
\\*The representation given in \eqref{sch-in-p} is much easier for the computation of $X_G$, whether by hand or computer. Using this representation, we identify $X_G$ with the the multiset of partitions formed by taking subsets of edges as described above. The main focus of our research was to write a program to search through all trees up to 23 vertices and record the specific ones which have many matching terms of $X_G$. The number 23 was chosen due to hardware limits. Since the number of unique trees grows approximately as $3^n$, each increment in size of trees we consider triples all computation times.
\\*

\pagebreak[2]

For a concrete example of the computations we're performing, consider the two non-isomorphic trees with 4 vertices:
\\*
\centerline{\includegraphics[width=.4\textwidth]{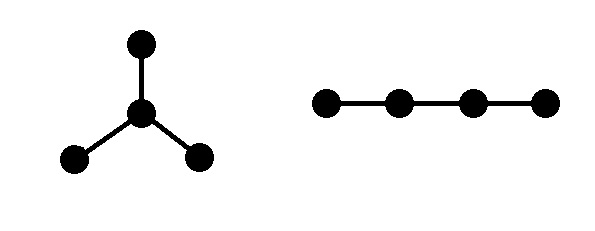}}
\\*
Let $T_1$ be the tree on the left. We then calculate $X_{T_1}$ using \eqref{sch-in-p}. There are a total of 8 terms, corresponding to the 8 subsets of our 3-element edge set. We will refer to the edges as 1, 2, and 3. Then we find $\theta_{T_1}(S)$ for each $S \subseteq \{1,2,3\}$.

\begin{center}
$\theta_{T_1}(\{1,2,3\}) = (1,1,1,1), \; \; \theta_{T_1}(\{1,2\})=\theta_{T_1}(\{2,3\})=\theta_{T_1}(\{1,3\}) = (1,1,2)$
\\*
$\theta_{T_1}(\{1\})=\theta_{T_1}(\{2\})=\theta_{T_1}(\{3\}) = (1,3), \; \;$and$\; \; \theta_{T_1}(\{\})=(4)$
\\*
\end{center}

So the partitions given by $T_1$ are $(1,1,1,1)$; $3(1,1,2)$; $3(1,3)$; and $(4)$. Therefore by equation \eqref{sch-in-p}, we have:

\begin{center}
 $X_{T_1} = p_{(1,1,1,1)} - 3p_{(1,1,2)} + 3p_{(1,3)} - p_{(4)} = {p_1}^4 -3{p_1}^2{p_2} + 3p_1p_3 - p_4$ 
 \end{center}
 \begin{center}
 $= (x_1 + x_2 + ...)^4 -3(x_1+x_2+...)^2(x_1^2 + x_2^2 +...) +3(x_1+x_2+...)(x_1^3 +x_2^3+...) -(x_1^4 +x_2^4+...)$
\end{center}
 Call the tree (path) on the right $T_2$. Then after doing the same, we have: 
 \begin{center}
    $X_{T_2} = p_{(1,1,1,1)} - 3p_{(1,1,2)} + 2p_{(1,3)} + p_{(2,2)} - p_{(4)} = {p_1}^4 -3{p_1}^2{p_2} + 2p_1p_3 + p_2^2- p_4$
 \end{center}
 
As we see, $X_{T_1} \neq X_{T_2}$. Specifically, $T_2$ has an edge corresponding to the partition $(2,2)$, while $T_1$ does not. It suffices to consider the multi-set of partitions, rather than the full symmetric polynomial representation. Indeed, the Fundamental Theorem of Symmetric Polynomials says that any symmetric polynomial has a unique representation with elementary symmetric polynomials. And since power sum functions can be uniquely reduced to elementary symmetric polynomials by Newton's identities, we see that the multi-set of partitions described in representation \eqref{sch-in-p} uniquely determines $X_G$.
 \\*
 \\*Considering subsets formed by removing all but one edge, one receives practically no information about a tree. There are $n-1$ such subsets, since all trees on $n$ vertices have $n-1$ edges, and they all give the partition (2,1,1,...,1). 
In contrast, there are also $n-1$ subsets formed by removing a single edge, but they give much more information about the tree. For example, the number of partitions of the form $(1,n-1)$ is exactly the number of leaves in the tree, also the partition $(n/2,n/2)$ appears iff the tree has two centroids.
\\*
\\*Subsets of this form can each be thought of as removing a single edge from G, hence we refer to such a subset as a ``1-cut'' and for convenience we refer to the set $\{\theta_{G}(S): S \subseteq E $ and $ |S|=1\}$ as the ``1-cuts'' of a given graph. In general, the "k-cuts" of a graph is the set $\{\theta_{G}(S): S \subseteq E $ and $ |S|=k\}$. In the example given above, we see $T_1$ and $T_2$ have identical 2-cuts, but have different 1-cuts. 
\\*

\pagebreak[4]
\*Stanley's conjecture can be weakened by considering ``labeled'' cuts. Usually we are just considering the multiset $\{\theta_{T}(S): S \subseteq E\}$. In this way we identify the terms of \eqref{sch-in-p} with elements of this multiset. However, we could consider a stronger form of this multiset, in which each element includes the subset of edges that were cut to create it. That is, the set of ``labeled cuts'' is $\{(S,\theta_{T}(S)):S \subseteq E\}$. In \cite{OS}, Orellana and Scott prove that ``labeled 2-cuts'', i.e. $\{(S,\theta_{T}(S)):S \subseteq E, |S|=2\}$ is enough to distinguish trees with single centroids. We optimize this result by both providing the smallest two trees (with two centroids) which have identical labeled 2-cuts, then proving that for any $3\leq k \leq n-3$, labeled k-cuts are enough to distinguish any tree. These proofs involving labeled k-cuts do not extend to the general case of unlabeled cuts since they heavily rely on identifying an edge with its 1-cut then examining all larger cuts that include the same edge.
{\bf Remark.} The representation of $X_G$ given by \eqref{sch-in-p} was discovered independently (in a polynomial form) in \cite{CDL}. Their motivation comes from knot theory and uses the Hopf algebra structure on the space spanned by forests. The polynomial defined there, the {\it weighted chromatic polynomial}, is defined for graphs with integer weights at vertices. As such it satisfies a deletion/contraction relation: it's value on a weighted graph is equal to the sum of it's values on a graph with an edge deleted plus its values on a graph with the edge contracted. 

\begin{equation} 
        \label{wcp}
        X_G(x_1,x_2,...) = X_{G-e} + X_{G\backslash e}
\end{equation}

Where contracting an edge creates a vertex with weight having the sum of the weights of the two vertices previously connected by that edge. Finally we have the recursive base case when $G$ has no edges:

\begin{equation} 
        X_G(x_1,x_2,...) = \prod_{i=1}^{n} x_{w(i)}
    \end{equation}

Where $w(i)$ is the weight of each isolated vertex. The equivalence of \eqref{wcp} to Stanley's symmetric chromatic function was established in \cite{NW}. In light of the definition above, the reader can see the equivalence (up to a negative sign) by corresponding each term of \eqref{sch-in-p} to the recursive step in which all the edges of $S$ are removed, and each connected component is contracted to 1 vertex. 
    In \cite{NW}, Noble and Welsh generalized the weighted chromatic polynomial above to all graphs. Similarly to the Tutte polynomial, they add a new variable and base case for edges which are ``loops'' (connected to the same vertex on both ends). However, for trees and forests this variable does not occur and their polynomial is reduced to the original weighted chromatic polynomial of \cite{CDL}.

\pagebreak[4]

\maketitle

\section{Constructing any Tree from Labeled 3-cuts}

In this section, we prove a generalization of a classification theorem given by Theorem 5.7 in Orellana and Scott. Their theorem states that \textit{labeled} 2-cuts distinguish trees with only one centroid. We extend this result by showing that for any $3 \le k \le n-3$, labeled $k$-cuts distinguish all trees, whether they have one centroid or two. This result is significant because it may give us insight into the problem of showing that unlabeled $k$-cuts, for some $k$, distinguishes trees, which implies Stanley's Conjecture.

We use the same $\theta_{G}(S)$ as defined above, e.g. if $T$ is the left tree in Figure 1 below, $\theta_T(\{e_2, e_3\})=(8,3,3)$. Note that we are assuming knowledge of the \textit{map} $\theta_{G}$, so for each subset of edges, we know the corresponding partition of $n$. This is what we mean by \textit{labeled} cuts, as opposed to the unlabeled case where we only know the multiset of partitions generated by subsets of $E$. Stanley's conjecture would be implied by results of this kind for unlabeled cuts, but not necessarily the results we actually prove for labeled cuts. For consistency, we order partitions in reverse lexicographical order: Given $n_1 \ge \ldots \ge n_\ell$ and $m_1 \ge \ldots \ge m_\ell$, $(n_1, \ldots, n_\ell)>(m_1, \ldots, m_\ell)$ if for the first index $i$ such that $n_i \not =m_i$, we have $n_i <m_i$.

The weight of a vertex in a tree is the maximal number of edges in any subtree that contains the vertex as a leaf. We call a vertex of $T$ a {\it centroid} if it has minimum weight. It is known that trees always have one or two centroids. In Figures 1 and 2, $c_1$ and $c_2$ are the centroids of the trees.

It is not hard to show that there exists an edge $e$ adjacent to a centroid such that $\theta_T(\{e\})$ is maximal among all edges $e$ in $T$. Additionally, if a tree has two centroids, then the edge connecting the two centroids is the unique edge $e$ such that $\theta_T(\{e\})=({n \over 2}, {n \over 2})$. The following lemma is another result about double-centroid trees.

\begin{lem}
\label{3-cuts} 
If $T$ is a double centroid tree on $n \ge 4$ vertices, $e_1$ is the edge connecting its two centroids, and $e_i \not = e_j$ are any other edges, then $\theta_T(\{e_1,e_i,e_j\})$ is of the form $({n \over 2}, n_1, n_2, n_3)$, where $n_i<{n \over 2},$ if $e_i$ and $e_j$ are in the same connected component after $e_1$ is removed and $\theta_T(\{e_1,e_2,e_i\})$ is of the form $(m_1, m_2, m_3, m_4)$, where $m_i < {n \over 2}$, if $e_i$ and $e_j$ are in different connected components after $e_1$ is removed. 
\end{lem}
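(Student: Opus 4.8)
The plan is to read off the four components of $T-\{e_1,e_i,e_j\}$ directly, using only two structural facts: removing a single edge from a tree (or forest) increases the number of components by exactly one, so $T-\{e_1,e_i,e_j\}$ has exactly four components and $\theta_T(\{e_1,e_i,e_j\})$ has four parts; and the characterization recalled just above, that a double-centroid tree satisfies $\theta_T(\{e_1\})=(n/2,n/2)$. Write $A$ and $B$ for the two components of $T-e_1$, with $c_1\in A$, $c_2\in B$, so $|V(A)|=|V(B)|=n/2$. Since $e_1$ is the only edge of $T$ joining $A$ to $B$, every other edge of $T$ lies entirely in $A$ or entirely in $B$, which makes the hypothesis ``$e_i$ and $e_j$ are in the same component of $T-e_1$'' unambiguous.

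Case 1: $e_i,e_j$ are both edges of $A$ (the case of $B$ is symmetric). Then $B$ is untouched and survives as a single component of size $n/2$. Deleting the two distinct edges $e_i,e_j$ from the $n/2$-vertex tree $A$ yields exactly three components, of sizes $n_1,n_2,n_3\ge 1$ with $n_1+n_2+n_3=n/2$; having three summands each at least $1$ forces $n_k\le n/2-2<n/2$ for every $k$. Hence $n/2$ is strictly the largest of the four parts and $\theta_T(\{e_1,e_i,e_j\})=(n/2,n_1,n_2,n_3)$ with each $n_k<n/2$. (If $n/2<3$ this case cannot occur, since then $A$ has at most one edge; as the statement is an implication there is nothing to prove there.)

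Case 2: $e_i$ is an edge of $A$ and $e_j$ an edge of $B$. Deleting $e_i$ splits $A$ into parts $a_1,a_2\ge 1$ with $a_1+a_2=n/2$, and deleting $e_j$ splits $B$ into parts $b_1,b_2\ge 1$ with $b_1+b_2=n/2$; these four numbers are the parts of $\theta_T(\{e_1,e_i,e_j\})$, and each is at most $n/2-1<n/2$. So $\theta_T(\{e_1,e_i,e_j\})=(m_1,m_2,m_3,m_4)$ with every $m_k<n/2$.

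I do not expect a real obstacle here; the only care needed is bookkeeping — confirming the cut always produces four parts, that a part equal to $n/2$ is genuinely realized (not merely an upper bound) in Case 1 so it heads the partition under the reverse-lexicographic order, and handling the small or degenerate sizes. The point worth emphasizing is the contrast the lemma records: a part equal to $n/2$ appears exactly when $e_i,e_j$ lie on the same side of the centroid edge, which is precisely the feature that lets a labeled $3$-cut containing $e_1$ detect ``same side versus opposite side'' in the later reconstruction argument.
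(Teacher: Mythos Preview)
Your argument is correct and is exactly the straightforward case analysis the paper has in mind; the paper's own proof is the single sentence ``The proof is straightforward.'' Your write-up simply fills in the bookkeeping (four parts total, the untouched side contributing the unique $n/2$ part in the same-side case, and each half splitting into strictly smaller pieces in the opposite-side case), so there is nothing to add.
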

\begin{proof}
The proof is straightforward.
\end{proof}

The proof of the following theorem is similar to the proof of Theorem 5.7 in \cite{OS}. The key difference in our approach is using
Lemma \ref{3-cuts}.

\begin{theo}
\label{2c}
Let $T$ be a double-centroid tree on $n \ge 4$ vertices with edge set $\{e_1, \ldots, e_{n-1}\}$, where $\theta_T(\{e_1 \}) \ge \cdots \ge \theta_T(\{e_{n-1}\})$. Then the data $\theta_T(\{e_i,e_j\})$ and $\theta_T(\{e_1,e_2,e_k\})$ for all edges $e_i,e_j,e_k$, where $e_i \not=e_j$, uniquely determines $T$.
\end{theo}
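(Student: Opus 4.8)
The plan is to reconstruct $T$ explicitly from the given data, mimicking the strategy of Theorem 5.7 in \cite{OS} but using Lemma \ref{3-cuts} to handle the obstruction caused by the second centroid. First I would identify the distinguished edge $e_1$: since $T$ has two centroids, $e_1$ is the unique edge with $\theta_T(\{e_1\}) = ({n \over 2}, {n \over 2})$, and by the ordering convention this is exactly the largest $1$-cut, so $e_1$ is the edge labeled $e_1$ in the hypothesis. Removing $e_1$ splits $T$ into two subtrees $A$ and $B$, each on $n/2$ vertices, each rooted at a centroid ($c_1 \in A$, $c_2 \in B$). The key combinatorial fact to extract from the data is: for $i,j \ge 2$, the edges $e_i$ and $e_j$ lie in the same side ($A$ or $B$) if and only if $\theta_T(\{e_1,e_i,e_j\})$ has a part equal to $n/2$ — this is precisely Lemma \ref{3-cuts}. (Here the data $\theta_T(\{e_1,e_2,e_k\})$ is what supplies the needed $3$-cuts; note $e_2$ is the second-largest $1$-cut, which we may take WLOG to lie in $A$.) Thus from the labeled data we can partition $\{e_2,\dots,e_{n-1}\}$ into the edge set of $A$ and the edge set of $B$, together with the knowledge of which part is which side of $e_1$.

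Next I would reconstruct each side separately. Each of $A$ and $B$, viewed as a tree rooted at its centroid with $e_1$ attached, is a \emph{single}-centroid situation in the sense relevant to the Orellana–Scott argument: for edges $e_i, e_j$ both in $A$, the $2$-cut $\theta_T(\{e_i,e_j\})$ restricted to the $A$-side components, together with the $1$-cut data $\theta_T(\{e_i\})$, tells us the nesting/containment structure of the edges of $A$ as one "peels" subtrees away from the centroid. Concretely, following \cite{OS}, one orders the edges of $A$ by their $1$-cut value $\theta_T(\{e_i\})$ and uses the $2$-cuts $\theta_T(\{e_i,e_j\})$ to decide, for each pair, whether the subtree hanging below $e_j$ is contained in the subtree hanging below $e_i$ or the two are disjoint; this determines the parent–child (ancestor–descendant) relation among edges and hence the rooted tree $A$. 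The same reconstruction applied to $B$ yields the rooted tree $B$. Finally, reattaching $A$ and $B$ by the edge $e_1$ joining their centroids recovers $T$ up to isomorphism.

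The main obstacle is the step that makes this theorem harder than the single-centroid case: disentangling the $2$-cut data across the $e_1$-cut. When $e_i \in A$ and $e_j \in B$, the partition $\theta_T(\{e_i,e_j\})$ is a "mixed" cut that does not by itself reveal which of its parts came from $A$ and which from $B$, and a priori one cannot even tell from a bare $2$-cut which side each edge is on. This is exactly why the $3$-cuts $\theta_T(\{e_1,e_2,e_k\})$ are indispensable: by cutting $e_1$ first we force the two sides to separate, and Lemma \ref{3-cuts} gives a clean test ("is $n/2$ a part?") for co-location. I would need to argue carefully that after classifying every edge as $A$-side or $B$-side, each $2$-cut $\theta_T(\{e_i,e_j\})$ with both edges on the same side can be correctly "read" as a partition of that side's $n/2$ vertices (the other side contributing a single block of size $n/2$, or two blocks if... — no: with only $e_i,e_j$ cut and both in $A$, side $B$ stays whole, contributing exactly one part $n/2$, so the remaining parts are the genuine $A$-side data), which is what lets the single-centroid reconstruction go through verbatim on each side. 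Establishing that this bookkeeping is unambiguous, and that the ancestor–descendant test from \cite{OS} still works when restricted to one side, is where the real care is required; the rest is routine.
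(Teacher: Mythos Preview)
Your approach is sound and uses the same two ingredients as the paper---Lemma \ref{3-cuts} to sort edges by side of $e_1$, and the Orellana--Scott attraction/repulsion test from the 2-cuts to recover nesting---but you organize them differently: you partition all edges into the two sides first and then reconstruct each rooted half, whereas the paper runs a single edge-by-edge induction on $T$, invoking the 3-cut $\theta_T(\{e_1,e_2,e_i\})$ only at the moment a new edge $e_i$ is attracted by $e_1$ alone (so must attach directly to $c_1$ or $c_2$). Both arrive at the same place; your version makes the role of the 3-cuts more transparent, the paper's avoids having to say anything separately about reconstructing a rooted tree from ancestor data.

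One slip to fix: your parenthetical ``side $B$ stays whole, contributing exactly one part $n/2$'' is wrong. If $e_i,e_j\in A$ and you cut only those two, the edge $e_1$ is still present, so $B$ is glued to whatever piece of $A$ contains $c_1$; the partition $\theta_T(\{e_i,e_j\})$ does \emph{not} split off a clean block of size $n/2$, and you cannot read off ``the genuine $A$-side data'' as a partition of $n/2$. Fortunately you don't need to: the attract/repel dichotomy (Proposition 5.2 of \cite{OS}) is stated for $T$ itself, and for $e_i,e_j\in A$ it already tells you whether the subtrees below $e_i$ and $e_j$ (away from $c_1$) are nested or disjoint---exactly the ancestor--descendant information you want. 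So drop the attempt to restrict partitions to $A$ and use the global test directly; with that correction your reconstruction of each side goes through.
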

\begin{proof}

First note that by the same proof as Theorem 5.8 in \cite{OS}, we can indeed determine $\theta_T(\{e_i\})$, for any edge $e_i$, using the data $\theta_T(\{e_i,e_j\})$, for any edges $e_i \not =e_j$. Later we will fully extend this statement by showing that for any $3 \leq k \leq n-3$, we can recover the set of labeled $k-1$-cuts from the set of labeled $k$-cuts. (Orrelana and Scott prove this for $k=2$.) 

As in \cite{OS}, we construct $T$ by adding edges in the order of $e_1, \ldots, e_{n-1}$. We need to show that each edge $e_i$ must be connected to a unique vertex of the graph $T_{i-1}$ consisting of the edges $e_1, \ldots, e_{i-1}$. 

So we will prove that we can uniquely determine $T_i$ for all $i\le n-1$ by induction. The base case is $i=1$ and $i=2$. $i=1$ is trivial, since $T_1$ must be the single edge $e_1$, which must connect the two centroids. Additionally, $e_2$ must be adjacent to $e_1$ or else there is an edge $e_j$, where $j>2$, separating $e_2$ from $e_1$, implying that $\theta(\{e_j\})>\theta(\{e_2\})$, a contradiction. Without loss of generality, we assume that $e_2$ is attached to $c_1$. Now assume that we can uniquely determine $T_{i-1}$ for some $i\ge 2$. We show that $e_i$ must be attached to a unique vertex in $T_{i-1}$, thus showing that $T_i$ is uniquely determined.

Note that $e_i$ must be connected to $T_{i-1}$, for otherwise there is an edge $e_j$, where $j>i$, separating $e_i$ from $e_1$, implying that $\theta_T(\{e_j\})>\theta(\{e_i\})$, a contradiction.

Before we proceed we define the notion of attraction and repulsion, which were introduced by Orellana and Scott. We say that two edges $e_1 \not = e_2$ of $T$ {\it attract} if there is a path in $T$ that begins at a centroid and contains both $e_1$ and $e_2$. If $e_1$ and $e_2$ do not attract, they are said to {\it repel}. Note that if a graph has two centroids, then the edge connecting the two centroids attracts every other edge in the tree. Orellana and Scott's Proposition 5.2 says that if $\theta_T(\{e_1\})=(n-i,i)$ and $\theta_T(\{e_2\})=(n-k,k)$, where ${n \over 2} \ge i \ge k$, then $\theta_T(\{ e_1, e_2\})=(n-i-k,i,k)$ if $e_1$ and $e_2$ attract and $\theta_T(\{ e_1, e_2\})=(n-i, i-k,k)$ if $e_1$ and $e_2$ repel. Thus since we know $\theta_T(e_i)$, for any edge $e_i$, and  $\theta_T(\{e_i,e_j\})$, for any edges $e_i \not =e_j$, we can determine whether any two edges $e_i \not =e_j$ attract or repel.

Now we can do casework based on which edges $e_1, \ldots, e_{i-1}$ attract $e_i$. Case 1 is Orellana and Scott's idea but
Case 2 is the key difference between our proof and Orellana and Scott's.

\noindent
{\bf Case 1: More than one edge in $T_i$ attracts $e_i$.} 

Then these edges form a path starting with $e_1$ and ending at a vertex $v \not = c_1, c_2$ of $T_{i-1}$. Since $e_1$ is connected to $T_{i-1}$, it must be attached to $v$. 

\noindent
{\bf Case 2: Only $e_1$ attracts $e_i$ }

Then $e_i$ must be attached to $c_1$ or $c_2$. By Lemma \ref{3-cuts} we can use $\theta_T(\{e_1,e_2,e_i\})$ to determine whether $e_i$ is on the same side of $e_1$ as $e_2$ is. This in turn tells us whether $e_i$ is attached to $c_1$ or $c_2$.

In either case we have shown that there is a unique vertex in $T_{i-1}$ to which $e_i$ is attached. Thus $T_i$ is a uniquely determined tree, so our induction is complete.
\end{proof}

Orellana and Scott gave an example of two trees with two centroids that have exactly the same values of $\theta_T(\{e_i,e_j\})$. They are shown in Figure 1 below.

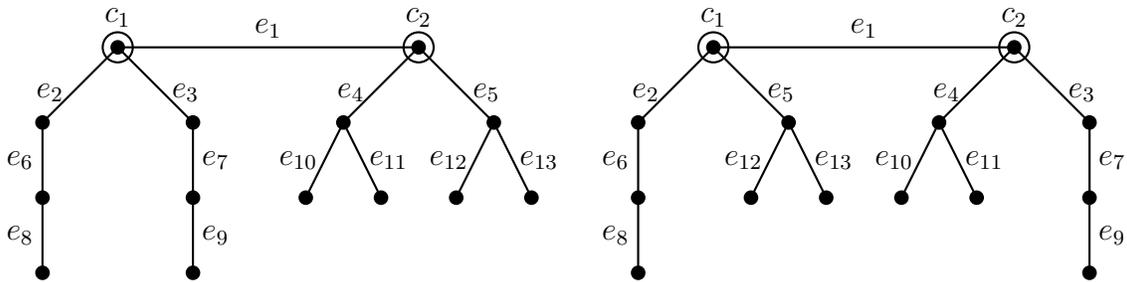
\begin{figure}[htbp]
\subfigure{
\begin {tikzpicture}[ xscale=1.0,yscale=1.0, thick,font=\large]

\tikzset{My Style/.style={draw,circle, scale=0.5, fill=black}}

 \node[My Style] (1)  at (0,0)  {};
 \node[My Style] (2)  at (4,0)  {};
 \node[My Style] (3)  at (-1,-1)  {};
 \node[My Style] (4)  at (-1,-2)  {};
 \node[My Style] (5)  at (-1,-3)  {};
 \node[My Style] (6)  at (1,-1)  {};
 \node[My Style] (7)  at (1,-2)  {};
 \node[My Style] (8)  at (1,-3)  {};
 \node[My Style] (9)  at (3,-1)  {};
 \node[My Style] (10)  at (2.5,-2)  {};
 \node[My Style] (11)  at (3.5,-2)  {};
 \node[My Style] (12)  at (5,-1)  {};
 \node[My Style] (13)  at (4.5,-2)  {};
 \node[My Style] (14)  at (5.5,-2)  {};

\draw (0,0) circle (0.2);
\draw (4,0) circle (0.2);

\draw (1) edge (2); 
\draw (1) edge (3);
\draw (3) edge (4);
\draw (4) edge (5);
\draw (1) edge (6);
\draw (6) edge (7);
\draw (7) edge (8);
\draw (2) edge (9);
\draw (9) edge (10);
\draw (9) edge (11);
\draw (2) edge (12);
\draw (12) edge (13);
\draw (12) edge (14);

 \node at (2,0.25)  {$e_1$};
 \node at (-0.9,-0.6)  {$e_2$};
 \node at (0.9,-0.6)  {$e_3$};
 \node at (-1.3,-1.5)  {$e_6$};
 \node at (-1.3,-2.5)  {$e_8$};
 \node at (1.3,-1.5)  {$e_7$};
 \node at (1.3,-2.5)  {$e_9$};

 \node at (3.1,-0.6)  {$e_4$};
 \node at (4.9,-0.6)  {$e_5$};
 \node at (2.4,-1.5)  {$e_{10}$};
 \node at (3.6,-1.5)  {$e_{11}$};
 \node at (4.4,-1.5)  {$e_{12}$};
 \node at (5.6,-1.5)  {$e_{13}$};

 \node at (0,0.4)  {$c_1$};
 \node at (4,0.4)  {$c_2$};

\end{tikzpicture}
}
\subfigure{
\begin {tikzpicture}[ xscale=1.0,yscale=1.0, thick,font=\large]

\tikzset{My Style/.style={draw,circle, scale=0.5, fill=black}}

 \node[My Style] (1)  at (0,0)  {};
 \node[My Style] (2)  at (4,0)  {};
 \node[My Style] (3)  at (-1,-1)  {};
 \node[My Style] (4)  at (-1,-2)  {};
 \node[My Style] (5)  at (-1,-3)  {};
 \node[My Style] (6)  at (1,-1)  {};
 \node[My Style] (7)  at (0.5,-2)  {};
 \node[My Style] (8)  at (1.5,-2)  {};
 \node[My Style] (9)  at (3,-1)  {};
 \node[My Style] (10)  at (2.5,-2)  {};
 \node[My Style] (11)  at (3.5,-2)  {};
 \node[My Style] (12)  at (5,-1)  {};
 \node[My Style] (13)  at (5,-2)  {};
 \node[My Style] (14)  at (5,-3)  {};

\draw (0,0) circle (0.2);
\draw (4,0) circle (0.2);

\draw (1) edge (2); 
\draw (1) edge (3);
\draw (3) edge (4);
\draw (4) edge (5);
\draw (1) edge (6);
\draw (6) edge (7);
\draw (6) edge (8);
\draw (2) edge (9);
\draw (9) edge (10);
\draw (9) edge (11);
\draw (2) edge (12);
\draw (12) edge (13);
\draw (12) edge (14);

 \node at (2,0.25)  {$e_1$};
 \node at (-0.9,-0.6)  {$e_2$};
 \node at (0.9,-0.6)  {$e_5$};
 \node at (-1.3,-1.5)  {$e_6$};
 \node at (-1.3,-2.5)  {$e_8$};
 \node at (5.3,-1.5)  {$e_7$};
 \node at (5.3,-2.5)  {$e_9$};

 \node at (3.1,-0.6)  {$e_4$};
 \node at (4.9,-0.6)  {$e_3$};
 \node at (2.4,-1.5)  {$e_{10}$};
 \node at (3.6,-1.5)  {$e_{11}$};
 \node at (0.4,-1.5)  {$e_{12}$};
 \node at (1.6,-1.5)  {$e_{13}$};

 \node at (0,0.4)  {$c_1$};
 \node at (4,0.4)  {$c_2$};

\end{tikzpicture}

}
\caption{Two non-isomorphic double-centroid trees with the same values of $\theta_T(e_i, e_j)$}
\end{figure}

Note that for the left tree $\theta_T(\{e_1,e_2,e_3\})=(7,3,3,1)$ and for the right tree $\theta_T(\{e_1,e_2,e_3\})=(4,4,3,3)$ . To fully determine either tree we actually just need the data $\theta_T(\{e_1,e_2,e_3\})$, $\theta_T(\{e_1,e_2,e_4\})$, $\theta_T(\{e_1,e_2,e_5\})$.

In the course of our research, we discovered the trees shown in Figure 2. They are the pair of trees with the fewest number of vertices that have identical values of $\theta_T(e_i, e_j)$. These trees have essentially only one way to label the edges, hence we are certain that labeled two-cuts are insufficient to distinguish them.

\begin{figure}[htbp]
\subfigure{
\begin {tikzpicture}[ xscale=1.2,yscale=1.0, thick,font=\large]

\tikzset{My Style/.style={draw,circle, scale=0.7, fill=black}}

 \node[My Style] (1)  at (1,0)  {};
 \node[My Style] (2)  at (-1,1.5)  {};
 \node[My Style] (3)  at (-1,0.5)  {};
 \node[My Style] (4)  at (-1,-0.5)  {};
 \node[My Style] (5)  at (-1,-1.5)  {};
 \node[My Style] (6)  at (4,1)  {};
 \node[My Style] (7)  at (4,-1)  {};
 \node[My Style] (8)  at (5,1.5)  {};
 \node[My Style] (9)  at (5,-1.5)  {};
 \node[My Style] (10)  at (3,0)  {};

\draw (1) edge (2); 
\draw (1) edge (3); 
\draw (1) edge (4); 
\draw (1) edge (5); 
\draw (10) edge (6); 
\draw (10) edge (7); 
\draw (6) edge (8); 
\draw (7) edge (9); 
\draw (10) edge (1);

\draw (1,0) circle (0.2);
\draw (3,0) circle (0.2);

 \node at (2,0.25)  {$e_1$};
 \node at (0,1)  {$e_4$};
 \node at (0,0.4)  {$e_5$};
 \node at (0,-0.5)  {$e_6$};
 \node at (0,-1.1)  {$e_7$};

 \node at (3.5,0.8)  {$e_2$};
 \node at (3.5,-0.8)  {$e_3$};

 \node at (4.5,1.5)  {$e_8$};
 \node at (4.5,-1.5)  {$e_9$};

 \node at (1,0.4)  {$c_1$};
 \node at (3,0.4)  {$c_2$};

 \node at (4,0)  {};

\end{tikzpicture}
}
\subfigure{
\begin {tikzpicture}[ xscale=1.2,yscale=1.0, thick,font=\large]

\tikzset{My Style/.style={draw,circle, scale=0.7, fill=black}}

 \node[My Style] (10)  at (1,0)  {};

 \node[My Style] (1)  at (0,1)  {};
 \node[My Style] (2)  at (-1,1.5)  {};

 \node[My Style] (3)  at (-1,-0.5)  {};
 \node[My Style] (4)  at (-1,-1.5)  {};

 \node[My Style] (5)  at (3,0)  {};

 \node[My Style] (6)  at (5,-0.5)  {};
 \node[My Style] (7)  at (5,-1.5)  {};

 \node[My Style] (8)  at (4,1)  {};
 \node[My Style] (9)  at (5,1.5)  {};

\draw (1,0) circle (0.2);
\draw (3,0) circle (0.2);

\draw (10) edge (3); 
\draw (10) edge (4); 
\draw (10) edge (1); 
\draw (1) edge (2); 
\draw (10) edge (5); 
\draw (5) edge (6); 
\draw (5) edge (7); 
\draw (5) edge (8); 
\draw (8) edge (9); 

 \node at (2,0.25)  {$e_1$};
 \node at (3.5,0.8)  {$e_2$};
 \node at (4.5,1.5)  {$e_8$};
 \node at (4,-0.4)  {$e_4$};
 \node at (4,-1)  {$e_5$};

 \node at (0.5,0.8)  {$e_3$};
 \node at (-0.5,1.5)  {$e_9$};
 \node at (0,-0.4)  {$e_6$};
 \node at (0,-1)  {$e_7$};

 \node at (1,0.4)  {$c_1$};
 \node at (3,0.4)  {$c_2$};

 \node at (-2,0)  {};

\end{tikzpicture}

}
\caption{Smallest pair of non-isomorphic trees with the same values of $\theta_T(\{e_i, e_j\})$}
\end{figure}
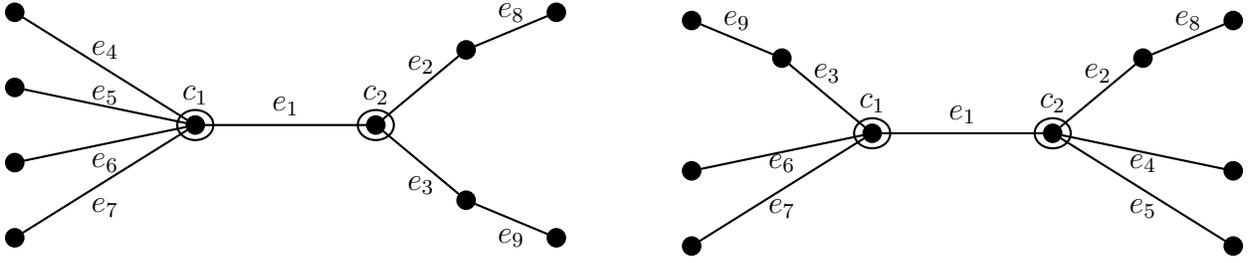

Note that for the left tree $\theta_T(\{e_1,e_2,e_3\})=(5,2,2,1)$ and for the right tree $\theta_T(\{e_1,e_2,e_3\})=(3,3,2,2)$. To fully determine either tree we actually just need the data $\theta_T(\{e_1,e_2,e_3\})$, $\theta_T(\{e_1,e_2,e_4\})$, $\theta_T(\{e_1,e_2,e_5\})$, $\theta_T(\{e_1,e_2,e_6\})$, $\theta_T(\{e_1,e_2,e_7\})$.

Both these examples suggest that our Theorem is essentially optimal. 

$$$$
$$$$
$$$$
We will now show that in general, the data obtained by deleting any $k$ edges determines the data obtained by deleting any $\ell$ edges, for $k \ge \ell$.

\begin{theo}
\label{k->k-1}
Let $T$ be a tree with edge set $\{ e_1, \ldots, e_{n-1}\}$. Then for $k \le n-3$, the set of labeled $k$-cuts determines the set of labeled $(k-1)$-cuts.
\end{theo}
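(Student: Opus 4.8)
The plan is to handle each edge subset of size $k-1$ separately and reduce the statement to a self-contained fact about integer partitions. Fix $S'\subseteq E$ with $|S'|=k-1$. Since $T$ is a tree, $T-S'$ is a forest with exactly $k$ components $C_1,\dots,C_k$, each itself a tree, and $\theta_T(S')$ is precisely the partition of $n$ with parts $|C_1|,\dots,|C_k|$. The $n-k$ edges of $T$ outside $S'$ are exactly the edges of the $C_i$; if $e\notin S'$ lies in $C_i$, then deleting $e$ from the tree $C_i$ breaks it into two subtrees, so $\theta_T(S'\cup\{e\})$ is obtained from $\theta_T(S')$ by removing the part $|C_i|$ and inserting the two subtree orders. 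Hence from the labeled $k$-cuts we can extract, for every such $S'$, the multiset
\[
\mathcal{M}(S')=\bigl\{\,\theta_T(S'\cup\{e\})\ :\ e\in E\setminus S'\,\bigr\},
\]
which has $n-k\ge 3$ elements (counted with multiplicity); since we already know $S'$, the task is reduced to recovering the single partition $\theta_T(S')$ from $\mathcal{M}(S')$, and then doing this for all $S'$.

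Thus the theorem follows from the combinatorial lemma: \emph{if $\mu$ is a partition of $n$ into $m$ parts with $m\le n-3$, then the multiset of partitions obtained by splitting, for each edge of an arbitrary forest whose component orders are the parts of $\mu$, the relevant part according to the two sides of that edge, determines $\mu$ — independently of the shapes of the components.} The bound $m\le n-3$, i.e.\ $k\le n-3$, is necessary: when $m=n-2$ both $\mu=(3,1^{\,n-3})$ (big component a path on three vertices) and $\mu'=(2,2,1^{\,n-4})$ (two disjoint edges) yield the multiset $\{(2,1^{\,n-2}),(2,1^{\,n-2})\}$, so labeled $(n-2)$-cuts do not determine labeled $(n-3)$-cuts; this also supplies the ``optimality'' examples alluded to above.

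To prove the lemma I would induct on the number of edges $N:=n-m$. The base case $N=3$ is a finite verification: the big parts of $\mu$ are one $4$, or a $3$ and a $2$, or three $2$'s, and one checks that no two distinct such $\mu$ share a split-multiset. For the inductive step ($N\ge 4$) the idea is to recover the largest part $c_1$ of $\mu$ and then pass to a smaller instance. One shows that $c_1$ is the largest part occurring anywhere in $\mathcal{M}(S')$ in every case except when $\mu$ has a single big part $(N+1,1^{m-1})$, and that this exceptional shape is detectable — it is the unique $\mu$ that can be ``realized'' by $\mathcal{M}(S')$ with only one big part, using that a tree has at least two leaf edges to rule out the spurious alternatives — in which case $\mu$ is already pinned down. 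Otherwise, knowing $c_1$, one isolates the part of $\mathcal{M}(S')$ coming from splitting a $c_1$-component, deletes a leaf of such a component (replacing one part $c_1$ by $c_1-1$, giving a partition of $n-1$ into $m$ parts with $N-1\ge 3$ edges, to which the inductive hypothesis applies once the induced change of multiset is shown to be forced), while the remaining degenerate shape $\mu=(2^{N},1^{m-N})$ (all big parts $2$) is dispatched directly, since there the split-multiset is $N$ copies of a single partition.

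The step I expect to be the real obstacle is the robust recovery of an individual part of $\mu$. The subtlety is that a component's contribution to the split-multiset depends on its shape — a star $K_{1,c-1}$ contributes only the split $(1,c-1)$, with multiplicity $c-1$, while a path $P_c$ contributes many different splits — so the parts of $\mu$ cannot be read off ``locally,'' and even two forests realizing the same $\mu$ generally produce different split-multisets. What makes the reconstruction go through is that the parts of $\mu$ untouched by a given split appear intact inside the corresponding element of $\mathcal{M}(S')$, together with the fact that the hypothesis $N\ge 3$ forces enough edges to eliminate the small coincidences (like the $N=2$ one above). If the ``largest part'' invariant proves unwieldy, a fallback is to argue directly that at most one partition with $m\le n-3$ parts can be realizable with a prescribed multiset, by comparing the largest parts of two candidates and using the leaf-edge argument to force them to coincide, then descending.
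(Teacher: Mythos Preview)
Your reduction is exactly right: fixing $S'$ of size $k-1$ and asking whether the multiset $\mathcal{M}(S')=\{\theta_T(S'\cup\{e\}):e\notin S'\}$ determines $\theta_T(S')$ is precisely how the paper sets things up, and your counterexample at $k=n-2$ is the same one the paper gives. The divergence is in how the recovery of $\theta_T(S')$ is carried out, and your inductive plan has a genuine gap.

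The problem is the step ``delete a leaf of a $c_1$-component and apply the inductive hypothesis.'' To invoke induction you would need to manufacture, from $\mathcal{M}(S')$ alone, the split-multiset of the \emph{new} forest (one $c_1$-component shrunk to $c_1-1$ vertices). But that new multiset depends on the internal shape of the $c_1$-component, which you do not know: a path $P_{c_1}$ and a star $K_{1,c_1-1}$ contribute different elements to $\mathcal{M}(S')$, and deleting a leaf from each changes those contributions in different ways. So the ``induced change of multiset'' is \emph{not} forced by the data, and the induction does not close. You flag this yourself as the ``real obstacle,'' and it is one; the fallback you mention (compare two candidate $\mu$'s directly) might be made to work, but nothing in the proposal does it.

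The paper's proof avoids induction entirely with a device you almost reach when you invoke ``a tree has at least two leaf edges'': restrict attention to those $\ell\notin S'$ for which $\theta_T(S'\cup\{\ell\})$ has the maximum number of parts equal to $1$. These $\ell$ are leaf edges of the forest $T-S'$, and cutting a leaf edge in a component of size $c$ simply replaces the part $c$ by $c-1$ and $1$. Writing each such partition as $(x_{1,\ell},\dots,x_{k,\ell},1)$ in decreasing order, one recovers $\theta_T(S')$ directly as $\bigl(\max_\ell x_{1,\ell},\dots,\max_\ell x_{k,\ell}\bigr)$; a short case analysis (this is where $k\le n-3$ is used) handles the residual situation in which all leaf-edge partitions coincide, forcing $\theta_T(S')=(m,\dots,m,1,\dots,1)$ with $m$ and its multiplicity readable from the common partition. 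This is both shorter than your scheme and immune to the shape-dependence that blocks your inductive step.
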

\begin{proof}

More precisely, we claim that knowing $\theta_T(e_{i_1}, \ldots, e_{i_k})$ for all subsets of distinct edges $e_{i_1}, \ldots, e_{i_k}$ determines $\theta_T(e_{i_1}, \ldots, e_{i_{k-1}})$ for any subset of distinct $e_{i_1}, \ldots, e_{i_{k-1}}$.
So assuming we know the former, we will prove that we can determine $\theta_T(e_{i_1}, \ldots, e_{i_{k-1}})$ for arbitrary $i_1, \ldots, i_{k-1}$. 

Consider the range of $\theta_T(e_{i_1}, \ldots, e_{i_{k-1}},x)$ as $x$ runs over all edges in the forest $T'=T-\{e_{i_1},\ldots,e_{i_{k-1}} \}$. We are interested in the edges $\ell$ for which the partition $\theta_T(e_{i_1}, \ldots, e_{i_{k-1}},\ell)$ has the maximum number of ones as components; these edges $\ell$ are exactly the leaf edges of the forest $T-\{e_{i_1},\ldots,e_{i_{k-1}} \}$. It suffices to show that we can use the data $\theta_T(e_{i_1}, \ldots, e_{i_{k-1}},\ell)$ for all such edges $\ell$ to determine $\theta_T(e_{i_1}, \ldots, e_{i_{k-1}})$.

If the partition $\theta_T(e_{i_1}, \ldots, e_{i_{k-1}},\ell)$ is not always the same for each such edge $\ell$, then let $\theta_T(e_{i_1}, \ldots, e_{i_{k-1}},\ell)=(x_{1,\ell}, \ldots, x_{k,\ell},1)$ where $x_{1,\ell} \ge \ldots \ge x_{k,\ell} \ge 1$. We must have

$$\theta_T(e_{i_1}, \ldots, e_{i_{k-1}})=\left(\max_{\ell}x_{1,\ell}, \ldots, \max_{\ell}x_{k,\ell} \right) .$$

If $\theta_T(e_{i_1}, \ldots, e_{i_{k-1}},\ell)$ is the same for each such edge $\ell$ of $T'$, then all trees in the forest $T'$ with more than one vertex have the same number of vertices, say $m$. In other words, $\theta_T(e_{i_1}, \ldots, e_{i_{k-1}})=(m, \ldots, m, 1, \ldots, 1)$ where $m$ occurs some number $M \ge 1$ times and $1$ occurs $k-M$ times. This means that $\theta_T(e_{i_1}, \ldots, e_{i_{k-1}},\ell)=(m, \ldots, m, m-1, 1, \ldots, 1)$, where $m$ occurs $M-1$ times, $m-1$ occurs once, and $1$ occurs $k-M$ times. Clearly $M \cdot m+k-M=n$, so $M(m-1)=M \cdot m-M=n-k \ge 3$ since $k \le n-3$. It follows that $m \ge 4$, $m=3$ and $M \ge 2$, or $m=2$ and $M \ge 3$. These cases exactly correspond to the following three cases for $\theta_T(e_{i_1}, \ldots, e_{i_{k-1}},\ell)$:

\begin{enumerate}
\item $m$ occurs $M-1 \ge 0$ times and $m-1 \ge 3$ occurs exactly once
\item $m = 3$  occurs $M-1 \ge 1$ time and $m-1 = 2$ occurs $1$ time
\item $m=2$ occurs at least $M-1 \ge 2$ times
\end{enumerate}

Based on the number of occurences of $2$ in $\theta_T(e_{i_1}, \ldots, e_{i_{k-1}},\ell)$, we can determine which of the three cases $\theta_T(e_{i_1}, \ldots, e_{i_{k-1}},\ell)$ must be in, and we can thereby determine what $m$ and $M$ must be. Thus we have determined $\theta_T(e_{i_1}, \ldots, e_{i_{k-1}})$.
\end{proof}

Note that the upper bound on $k$ in Theorem \ref{k->k-1} is optimized. If $k=n-2$ then $\theta_T(e_{i_1}, \ldots, e_{i_k})=(2,1, \ldots, 1)$ for any tree $T$, but $\theta_T(e_{i_1}, \ldots, e_{i_{k-1}})$ may vary depending on the tree $T$,  so clearly Theorem \ref{k->k-1} does not hold.  In particular, if $\theta_T(e_{i_1}, \ldots, e_{i_{k-1}},\ell)=(2,1,\ldots,1)$, we can not determine whether $\theta_T(e_{i_1}, \ldots, e_{i_{k-1}})=(3,1,\ldots,1)$ or $(2,2,1, \ldots,1)$. Note that for these two partitions, we have $m=3, M=1$ and $m=2, M=2$, so the proof of Theorem \ref{k->k-1} fails in this case.

On the other hand, for $k=n-1$, Theorem \ref{k->k-1} trivially holds, since  $\theta_T(e_{i_1}, \ldots, e_{i_k})=(1,1, \ldots, 1)$ and $\theta_T(e_{i_1}, \ldots, e_{i_{k-1}})=(2,1, \ldots, 1)$ for any tree $T$.

The following is a corollary of the previous two theorems. It includes all trees, single centroid or double centroid.

\begin{cor}
Let $T$ be a tree on $n \ge 4$ vertices with edge set $\{e_1, \ldots, e_{n-1}\}$ and let $k$ be an integer such that $3 \le k \le n-3$. Then the data $\theta_T(\{e_{i_1}, \ldots, e_{i_k}\})$ for all edges $e_{i_1}, \ldots, e_{i_k}$ uniquely determines $T$.
\end{cor}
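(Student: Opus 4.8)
The plan is to derive the corollary by assembling three facts that are already in place: the descent of Theorem~\ref{k->k-1}, Orellana and Scott's Theorem~5.7 in \cite{OS} (labeled $2$-cuts distinguish single-centroid trees), and our Theorem~\ref{2c} (the reconstruction of double-centroid trees from labeled $2$-cuts together with the triples $\theta_T(\{e_1,e_2,e_k\})$). Note first that $3 \le k \le n-3$ forces $n \ge 6$, so the statement is vacuous for $n \in \{4,5\}$ and we may assume $n \ge 6$.

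\emph{Step 1 (descent to the small cuts).} Given the set of labeled $k$-cuts, apply Theorem~\ref{k->k-1} repeatedly: with parameter $j$ it recovers the labeled $(j-1)$-cuts from the labeled $j$-cuts whenever $j \le n-3$. Running $j = k, k-1, \ldots, 2$ — and every such $j$ satisfies $2 \le j \le k \le n-3$ — we obtain in turn the labeled $(k-1)$-cuts, \ldots, the labeled $3$-cuts, the labeled $2$-cuts, and the labeled $1$-cuts. In particular we now know $\theta_T(\{e_i\})$ for each edge, $\theta_T(\{e_i,e_j\})$ for all $e_i \ne e_j$, and, because $k \ge 3$, $\theta_T(\{e_a,e_b,e_c\})$ for all triples of distinct edges.

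\emph{Step 2 (identify the centroid type, then reconstruct).} Using the $1$-cuts from Step~1, relabel the edges so that $\theta_T(\{e_1\}) \ge \cdots \ge \theta_T(\{e_{n-1}\})$. As recalled in the introduction, the partition $(n/2, n/2)$ occurs among the $1$-cuts precisely when $T$ has two centroids, so the $1$-cut data decides which case we are in. If $T$ has a single centroid, apply Orellana and Scott's Theorem~5.7 to the labeled $2$-cuts from Step~1 to recover $T$. If $T$ has two centroids, then $e_1$ is the unique edge with $\theta_T(\{e_1\}) = (n/2,n/2)$ (and hence, in our ordering, the first edge), while $e_2$, the edge of next-largest $1$-cut, must be adjacent to $e_1$ — exactly as argued in the proof of Theorem~\ref{2c} — so the hypotheses of that theorem hold. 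Since the labeled $3$-cuts were recovered in Step~1, we know $\theta_T(\{e_1,e_2,e_k\})$ for every edge $e_k$; feeding these and the labeled $2$-cuts into Theorem~\ref{2c} reconstructs $T$.

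Since all the substantive work is carried out inside the cited theorems, this is essentially bookkeeping rather than a new argument, and there is no real obstacle. The points that do need checking are: (i) every application of Theorem~\ref{k->k-1} in Step~1 must stay inside its valid range $j \le n-3$ — this is exactly why the corollary assumes $k \le n-3$ rather than merely $k \le n-2$; and (ii) the triple data $\theta_T(\{e_1,e_2,e_k\})$ required for the double-centroid case is among the cuts recovered in Step~1 precisely because $k \ge 3$.
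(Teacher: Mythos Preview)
Your proof is correct and follows essentially the same approach as the paper: descend from labeled $k$-cuts to labeled $2$- and $3$-cuts via repeated use of Theorem~\ref{k->k-1}, then invoke Orellana--Scott's Theorem~5.7 for single-centroid trees and Theorem~\ref{2c} for double-centroid trees. Your write-up is in fact more careful than the paper's, explicitly noting the vacuous range $n\in\{4,5\}$, verifying that each descent step stays within the hypothesis $j\le n-3$, and spelling out how the $1$-cuts determine the centroid type and the edge ordering needed for Theorem~\ref{2c}.
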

\begin{proof}
By repeated application of Theorem \ref{k->k-1}, the data $\{ \theta_T(\{e_{i_1}, \ldots, e_{i_k}\}) \}$ determines the data $\{ \theta_T(e_1, e_2, e_k)\}$ and $\{\theta_T(e_i, e_j)\}$ given in Theorem \ref{2c}. Thus double-centroid trees are determined by the data $\{ \theta_T(\{e_{i_1}, \ldots, e_{i_k}\}) \}$. Of course the data $\{ \theta_T(\{e_{i_1}, \ldots, e_{i_k}\}) \}$ also determines the data $\{\theta_T(e_i, e_j)\}$, so by Theorem 5.7 in Orellana and Scott, we can determine single centroid-trees as well.
\end{proof}

\pagebreak[4]

\section{Computational Methodology}

When computationally verifying Stanley's conjecture, we don't entirely compute $X_G$ for trees on $n$ vertices. Instead, we approach the problem systematically by checking consecutively larger cuts. First we compute the 1-cuts of all trees of a given number of vertices. Then, we collect trees with identical 1-cuts into classes. From there we compute 2-cuts just for those trees with identical 1-cuts and once again collect all those with identical 2-cuts. We repeat this process until all trees are distinguished by their k-cuts, for some k. In our computations, we've found 5-cuts to be sufficient to distinguish all trees up to 24 vertices.
$$$$
The task of searching for trees with identical Chromatic Polynomials was done exhaustively for trees with 1 to 23 vertices. The largest limitation to computation was the nature of growth in number of trees and number of partitions. There is little that can be done to abate the number of unlabelled trees on n vertices, (approximately $3^n$), however a filtration process was implemented to greatly minimize the complexity of comparing all trees on n vertices. 
$$$$
All distinct unlabelled trees on n vertices were generated in a sparse6 graph data format via the free opensource software geng in the gtools utilities provided by nauty \cite{nauty}. In order to efficiently compare trees based on their sets of partitions a preorder on graphs was defined. Sorted cuts and their multiplicity are given lexicographic ordering first in the values of the cuts, then by the values of their multiplicities. A set of cuts can then be sorted according to this ordering, and once sorted, sets of cuts can be compared lexicographically in their cuts. We can now define a preordering on graphs via an ordering on sets of cuts. A graph on n vertices is assigned a string of $n-1$ sets of cuts grouped by number. The first being its sorted set of 1-cuts and the last being the sorted set of $(n-1)$-cuts. Graphs on $n$ vertices are then sorted lexicographically. In other words, if Stanley's conjecture holds this preordering is in fact a total ordering. We have verified that this is the case for $n \leq 24$.
$$$$
Computation time is greatly reduced with this construction because the polynomial is being "lazily evaluated". The trees on n vertices are first be sorted by all $n-1$ of their 1-cuts, and filtered to trees who's orderings match up to this point, only then do we compute all ${n-1}\choose{2}$ 2-cuts, and so on. The source code of our program is available at \cite{files}


\begin{definition}[$\leq \text{on graphs}$]
\begin{itemize}
$$$$
\item Assign Graph G the word $(C_1,...,C_{n-1})$ of its sets of k-cuts, then
\item $G \leq G' \iff (C_1,...C_{n-1}) \leq (C_1',...,C_{n-1}') \text{ lexicographically }$, where $C_i = ( c_1, c_2, ..., c_{\binom{n-1}{i}} )$ $\text{ is a set of integer partitions}$
\item $C_i \leq C_i' \iff (c_1, ..., c_{\binom{n-1}{i}} ) \leq c_1', ..., c_{\binom{n-1}{i}}') \text{ lexicographically, where } c_j = (n_1, ... , n_{i+1})$ $\text{ is an integer partition of n}$ \item $c_j \leq c_j' \iff (n_1, ... ,n_{i+1}) \leq (n_1', ... ,n_{i+1}') \text{ lexicographically}$
\end{itemize}
\end{definition}

\pagebreak[4]

\maketitle

\section{Computational Results and Conjecture}

\*The following tables quantify precisely how many trees have non-unique k-cuts for k=1,2,3.
The second column is the number of trees on the given number of vertices with non-unique k-cuts. The ``density'' column is the second column divided by the total number of trees. It represents what proportion of trees are not distinguished by their k-cuts. The 4th column is the number of distinct families of trees sharing a set of k-cuts. That is, it's the number of equivalence classes, of size larger than 1, of trees with identical k-cuts. 
\*
\begin{multicols}{2}
\begin{flushleft}

\begin{tabular}{|l|l|l|l|}
\hline
vertices&1-cut&density&1-families\\ \hline
7&2&0.182&1\\ \hline
8&6&0.261&3\\ \hline
9&25&0.532&11\\ \hline
10&59&0.557&24\\ \hline
11&178&0.757&60\\ \hline
12&445&0.808&136\\ \hline
13&1154&0.887&274\\ \hline
14&2884&0.913&602\\ \hline
15&7425&0.959&1152\\ \hline
16&18650&0.965&2474\\ \hline
17&47824&0.983&4520\\ \hline
18&122328&0.988&9640\\ \hline
19&316032&0.994&17218\\ \hline
20&819370&0.996&36429\\ \hline
21&2140092&0.998&63813\\ \hline
22&5614634&0.998&135799\\ \hline
23&14817623&0.999&233970\\ \hline
\end{tabular} 

\columnbreak

\begin{tabular}{|l|l|l|l|}
\hline
vertices&2-cut&density&2-families \\ \hline
10&2&0.0189&1\\ \hline
11&2&0.00851&1\\ \hline
12&16&0.029&8\\ \hline
13&20&0.0154&10\\ \hline
14&145&0.0459&72\\ \hline
15&193&0.0249&96\\ \hline
16&1035&0.0536&498\\ \hline
17&1524&0.0313&742\\ \hline
18&7562&0.061&3571\\ \hline
19&11765&0.037&5643\\ \hline
20&54157&0.0658&25210\\ \hline
21&89294&0.0416&42363\\ \hline
22&387625&0.0689&178161\\ \hline
23&1643204&0.111&721593\\ \hline
\end{tabular}

$$$$

\begin{tabular}{|l|l|l|l|}
\hline
vertices&3-cut&density&3-families \\ \hline
16&2&0.000104&1\\ \hline
17&2&0.000041&1\\ \hline
18&22&0.000178&11\\ \hline
19&24&0.000076&12\\ \hline
20&186&0.000226&92\\ \hline
21&216&0.000101&107\\ \hline
22&1348&0.00024&667\\ \hline
23&1554&0.000105&769\\ \hline
\end{tabular}

\end{flushleft}

\end{multicols}  
$$$$
As described above, our algorithm involved taking all trees with $n$ vertices, and checking sequentially larger k-cuts until all trees have been distinguished. As a result, we've found "smallest counterexamples" for k = 1,2,3,4. That is, we have pairs of trees on the least number of vertices such that both trees have identical k-cuts, for k= 1,2,3,4. The trees are given below, in figures 3,4,5,6. Understanding precisely how two trees can have identical k-cuts would be a huge step towards proving Stanley's conjecture.

\includegraphics[width=1\textwidth]{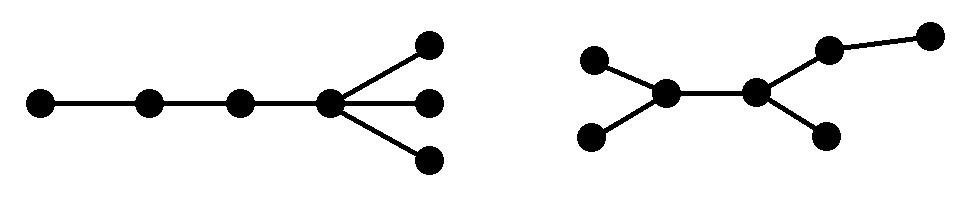}
\\*
\begin{center}Figure 3: The smallest pair of trees with identical 1-cuts.\end{center}
\includegraphics[width=1\textwidth]{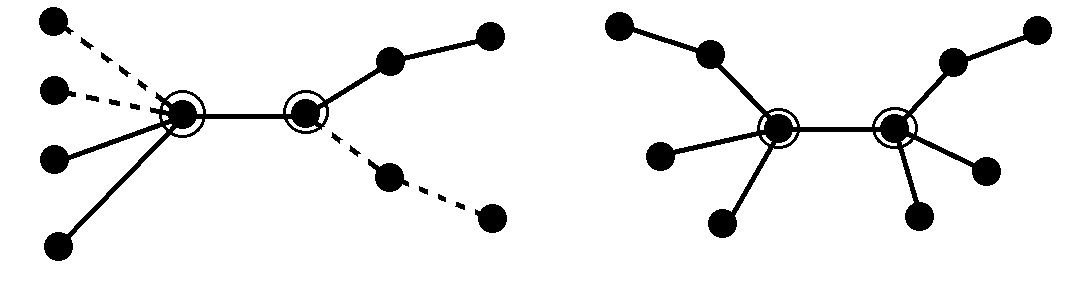}
\\*Figure 4: The smallest pair of trees with identical 2-cuts. Circled vertices are the centroids. Dashed edges on the left tree represent branches that can be swapped across the centroids to obtain the tree on the right.
\\*
$$$$
\includegraphics[width=1\textwidth]{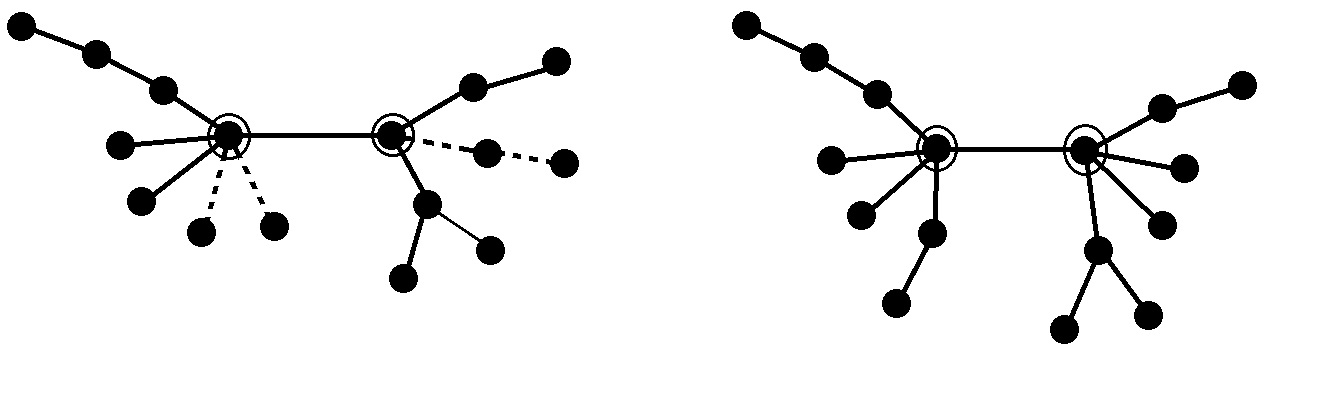}
\begin{center}Figure 5: The smallest pair of trees with identical 3-cuts.\end{center}
$$$$
\includegraphics[width=1\textwidth]{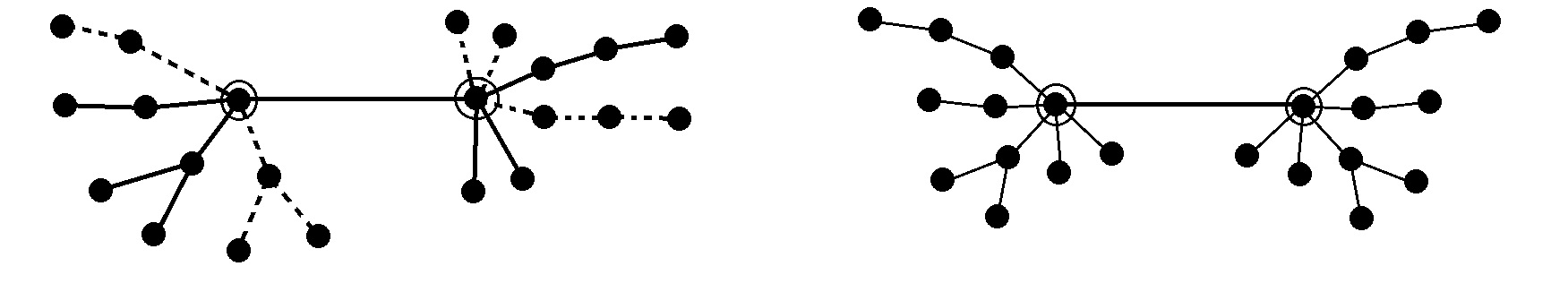}
\begin{center} Figure 6: The smallest pair of trees with identical 4-cuts.\end{center}
\large{
$$$$
\\*
\\*One interesting pattern we see in this data is the ``belayed exponential growth'' of the numbers of trees with non-unique 2(or 3)-cuts. Going from even to odd numbers of vertices, the total number of ``non-uniquely determined by 2-cuts'' trees grows relatively little. When going from an odd to an even order, there's a much larger increase. This is most likely related to the fact that only trees with an even number of vertices can have two centroids, which must have a central edge connecting them. In fig. 4, 5, and 6, one of the trees can be obtained from the other by swapping some of the branches between the centroids. This appears to be the simplest method for which two non-isomorphic trees can have identical k-cuts, which explains why it consistently appears in our minimal examples.
\\*
\\*If one could determine a strict lower bound on the number of vertices required for two trees to have identical k-cuts, this would likely prove Stanely's conjecture. With this in mind, our current efforts are focused on understanding how these ``smallest counterexamples'' given above are generated. 
\\*
\\*Additionally, we note that every minimal pair of trees with identical k-cuts (for k=2,3,4) extends to a pair of n+1 vertex trees with identical k-cuts simply by splitting the central edge and placing a vertex in the middle. Furthermore, these are the only trees on n+1 vertices with identical k-cuts. That is, for both n=22 and n=23, there is precisely one pair of trees with identical 4-cuts, and the pair on 23 vertices is simply the pair on 22 vertices with each given a vertex to split their central edge. This strongly supports the idea that pairs of trees having identical k-cuts is closely related to branch swapping about the centroids. We conjecture that given any pair of two-centroid trees with identical k-cuts, splitting the central edge gives two new trees with identical k-cuts. We have not yet proven this conjecture, but it is supported thus far by our data.

}

\pagebreak[4]


\begin{thebibliography}{1}

\bibitem{CDL} S.V.Chmutov, S.V.Duzhin, S.K.Lando,  \emph{Vassiliev knot invariants III. Forest algebra and weighted graphs}, Adv. in Soviet Math. {\bf 21} Singularities and Curves, V.I.Arnold ed. (1994) 135--145.
    
\bibitem{nauty}
    Brendan McKay and Adolfo Piperno
    \emph{Nauty and Traces} 
    Practical Graph Isomorphism, II, J. Symbolic Computation {\bf 60}  
    (2014) 94--112,
    \newline \url{http://dx.doi.org/10.1016/j.jsc.2013.09.003}.
    \newline \url{http://pallini.di.uniroma1.it/}.
    \newline 27 September 2013
    
\bibitem{NW} S.~Noble, D.~Welsh, \emph{A weighted graph polynomial 
   from chromatic invariants of knots}, 
   Annales de l'institut Fourier {\bf 49}(3) (1999) 1057--1087.
    
\bibitem{OS}
    Rosa Orellana and Geoffrey Scott, \emph{Graphs with Equal Chromatic Symmetric Functions,} {\tt arXiv:1308.6005} (2013)

\bibitem{files} Zane Smith,
    \url{http://www.cse.ohio-state.edu/~smithzan/trees/}
    
\bibitem{St}
    Richard Stanley, \emph{A symmetric function generalization of the chromatic polynomial of a graph,} Advances in Math. {\bf 111}(1) (1995) 166--194.

\bibitem{APZ}
    José Aliste-Prieto and José Zamora,
    \emph{Proper caterpillars are distinguished by their symmetric chromatic function,}
    \tt arXiv:1208.2267 (2012)
\end{thebibliography}
\end{document}